\newcounter{iii}
\newcommand{\bb}{{\mathcal B}}
\newcommand{\aaa}{{\mathcal A}}
\newcommand{\g}{\mathcal G}
\newcommand{\ff}{\mathcal F}
\theoremstyle{plain}
\newtheorem{thm}{Theorem}
\newtheorem{lem}{Lemma}
\theoremstyle{definition}
\newtheorem{defi}{Definition}
\date{}
\title{Non-trivial $r$-wise agreeing families}
\author{Peter Frankl\footnote{R\'enyi Institute, Budapest, Hungary; Email: {\tt peter.frankl@gmail.com}} \, and Andrey Kupavskii\footnote{Moscow Institute of Physics and Technology, St. Petersburg State University, Russia; Email: {\tt kupavskii@ya.ru} }}
\begin{document}
\maketitle
\begin{abstract}
  A family of subsets of $[n]$ is $r$-wise agreeing if for any $r$ sets from the family there is an element $x$ that is either contained in all or contained in none of the $r$ sets. The study of such families is motivated by questions in discrete optimization. In this paper, we determine the size of the largest non-trivial $r$-wise agreeing family. This can be seen as a generalization of the classical Brace-Daykin theorem.
\end{abstract}

Let $[n] = \{1,2,\ldots, n\}$ be the standard $n$-element set and $2^{[n]}$ its power set.
Let $\ff\subset 2^{[n]}$ be a family. We say that sets $F_1,\ldots, F_r\subset [n]$ {\it agree on a coordinate} $x\in[n]$ if  either $x\in\cap_{i\in[r]} F_i$ or $x\not\in\cup_{i\in[r]} F_i$. We call a family $\ff$ {\it $r$-wise $t$-agreeing} if any $r$ sets from $\ff$ agree on at least $t$ coordinates. For $t=1$ we call such families {\it $r$-wise agreeing} for shorthand. Additionally, we call $\ff$ {\it non-trivial} if $\cap_{A\in \ff}A=\emptyset$ and $\cup_{A\in \ff} A = [n]$ (that is, all sets from $\ff$ do not agree on a coordinate).

$r$-wise agreeing families appear in the context of packing and covering problems in combinatorial optimization \cite{Abdi1,Abdi2}. The connection with geometry is established through the correspondence between the subsets of $[n]$ and vertices of $\{0,1\}^n$. (This explains the use of the term `coordinate' in the paragraph above.) In particular, Abdi et al. \cite{Abdi1} proposed a conjecture that states that non-trivial $r$-wise agreeing families cannot be cube-ideal for sufficiently large $r$ (cf. \cite{Abdi1} for the definition of cube-idealness). While discussing possible strategies to attack this conjecture with Ahmad Abdi, the following question was raised:
\begin{quote}
  How big can a non-trivial $r$-wise agreeing family be?
\end{quote}
The goal of this note is to answer this question. We prove the following theorem.
\begin{thm}\label{thm1}
  Let $n>r\ge 2$ and $t\le 2^r-r-1$. Suppose that $\ff\subset 2^{[n]}$ is non-trivial $r$-wise $t$-agreeing. Then $|\ff|\le (r+t+1)2^{n-r-t}$.
\end{thm}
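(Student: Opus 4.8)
The plan is to normalize $\ff$ by a global symmetric difference, and then run an induction on $n$ organized around a single‑coordinate decomposition; the whole difficulty is concentrated in one structural lemma.

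\medskip
\noindent\textbf{Normalization.} Two tuples of sets agree on exactly the same coordinates after replacing every $F\in\ff$ by $F\triangle A_0$ for a fixed $A_0$ (a coordinate $x\in A_0$ merely swaps the roles of ``contained in all'' and ``contained in none''), and for $A_0\in\ff$ this operation also preserves non‑triviality; so I may assume $\emptyset\in\ff$. Then $\bigcap_{F\in\ff}F=\emptyset$ automatically, and non‑triviality becomes just $\bigcup_{F\in\ff}F=[n]$, i.e.\ every coordinate lies in some member. For orientation, the family to be matched is $\ff_0=\{F:|F\cap R|\le 1\}$ for a fixed $(r+t)$-set $R$: any $r$ of its members disagree on at most $r$ coordinates of $R$, hence agree on at least $|R|-r=t$; it is non‑trivial; and $|\ff_0|=(r+t+1)2^{n-r-t}$.

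\medskip
\noindent\textbf{The coordinate decomposition.} Fix $x\in[n]$ and set, inside $2^{[n]\setminus\{x\}}$,
$\mathcal U_x=\{F\setminus\{x\}:F\in\ff\}$ and $\mathcal L_x=\{S:S\in\ff\text{ and }S\cup\{x\}\in\ff\}$, so that $|\ff|=|\mathcal U_x|+|\mathcal L_x|$. Two facts drive the induction. First, $\mathcal U_x$ is non‑trivial on $[n]\setminus\{x\}$ (it contains $\emptyset$, and it covers every $y\ne x$ because $\ff$ does) and is $r$‑wise $(t-1)$‑agreeing: $r$ distinct members of $\mathcal U_x$ lift to $r$ distinct members of $\ff$, which agree on $\ge t$ coordinates, at most one of which is $x$. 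Second, $\mathcal L_x$ is $r$‑wise $t$‑agreeing on $[n]\setminus\{x\}$: for distinct $S_1,\dots,S_r\in\mathcal L_x$, the sets $S_1\cup\{x\},S_2,\dots,S_r$ are $r$ distinct members of $\ff$ that disagree at $x$, so all $\ge t$ of their agreements lie in $[n]\setminus\{x\}$ and are inherited by $S_1,\dots,S_r$. Hence, applying the induction hypothesis to $\mathcal U_x$ (for $n-1$, with $t$ replaced by $t-1$, still $\le 2^r-r-1$) gives $|\mathcal U_x|\le (r+t)2^{n-r-t}$, and the theorem follows once one exhibits a coordinate $x$ with $|\mathcal L_x|\le 2^{n-r-t}$. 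The regime $t=1$ falls outside this recursion (it is exactly the Brace--Daykin case) and is handled by a parallel, more delicate argument; small $n$ is checked directly, several cases being vacuous.

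\medskip
\noindent\textbf{The main obstacle.} Everything reduces to the link bound: \emph{some coordinate $x$ has $|\mathcal L_x|\le 2^{n-r-t}$}. This cannot follow from $|\ff|$ alone, so one must use the agreeing structure. I would take a minimal subfamily $F_1,\dots,F_s\subseteq\ff$ (with $s\ge r+1$) that disagrees on every coordinate — it exists by non‑triviality — and combine its minimality with the $r$‑wise $t$‑agreeing condition applied to all $r$‑subsets of $\{F_1,\dots,F_s\}$ to split $[n]$ into ``zones'' governed by the individual $F_i$; one then shows every member of $\ff$ is essentially determined by its trace on a bounded part of this picture, and that a suitable zone‑coordinate has a small link. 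The hypothesis $t\le 2^r-r-1$ is used precisely here: it is the threshold that keeps a core of size $r+t<2^r$ optimal and rules out the ``spread‑out'' alternatives (Hamming balls, which are $r$‑wise $(n-r\rho)$‑agreeing, and analogous multi‑core families) that would otherwise make every link large and break the recursion. Pinning down $s$ and the zone sizes sharply enough to make this go through is the bulk of the work, and is the step I expect to be hardest.
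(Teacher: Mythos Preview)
Your decomposition $|\ff|=|\mathcal U_x|+|\mathcal L_x|$ is correct, and so is the claim that $\mathcal U_x$ is non-trivial $r$-wise $(t-1)$-agreeing on $[n]\setminus\{x\}$ --- that is exactly the paper's Lemma~2. But the whole weight of your argument rests on the link bound $|\mathcal L_x|\le 2^{n-r-t}$ for some $x$, and this you do not prove: the sketch with a minimal globally disagreeing subfamily and ``zones'' is not an argument, and I do not see how to turn it into one. The bound is \emph{tight} on every core coordinate of the extremal family, so there is no slack; the restriction $t\le 2^r-r-1$ has to enter in some sharp way that your outline never pins down. Separately, the base case $t=1$ is \emph{not} the Brace--Daykin theorem: $r$-wise $1$-agreeing is strictly weaker than $r$-wise $1$-union (agreement can come from common containment, not only common avoidance), so Brace--Daykin does not apply and this case is a second gap.

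The paper's proof bypasses the link bound entirely. It repeatedly applies Kleitman squashing $S_i$, which preserves both $|\ff|$ and the $r$-wise $t$-agreeing property. If non-triviality survives all $n$ squashes, the resulting family is down-closed; for a down-closed family $r$-wise $t$-agreeing coincides with $r$-wise $t$-union (one may always remove a commonly contained coordinate), and the Brace--Daykin--Frankl theorem yields the bound directly --- this is where $t\le 2^r-r-1$ is used. If instead non-triviality is destroyed at some squash $S_j$, then by definition $\mathcal L_j=\ff(j)\cap\ff(\bar j)=\emptyset$ for the current family, so $|\ff|=|\mathcal U_j|$, and induction on $r+t$ via Lemma~2 (reducing $t$, or reducing $r$ when $t=1$) finishes with a strict inequality. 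In short, the paper never locates a small link in $\ff$ itself; it deforms $\ff$ until a link becomes \emph{empty} or the problem collapses to the known union case.
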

Let us mention that in the case $r=2, t=1$ the bound is exactly $2^{n-1}$. The proof in this case is easy: it is immediate to see that $\ff$ contains at most $1$ set out of each pair of complementary sets $A,[n]\setminus A$. The same argument shows that any $r$-wise agreeing family has size at most $2^{n-1}$. At the same time, the family of all sets not containing $1$ provides an example of an $r$-wise agreeing family of size $2^{n-1}$. If we drop the non-triviality assumption then the family with some fixed $t$ coordinates provides a lower bound of $2^{n-t}$ for the size of the largest $r$-wise $t$-agreeing family.  Theorem~\ref{thm1} shows that non-triviality forces the family to be considerably smaller.

It is natural to draw a parallel with union\footnote{A notion dual to intersecting families, which is more convenient to work with here.} families. A family $\ff\subset 2^{[n]}$ is {\it $r$-wise $t$-union} if $|A_1\cup\ldots\cup A_r|\le n-t$ for any $A_1,\ldots, A_r\in \ff$. It is {\it non-trivial} if $\cup_{A\in \ff}A=[n]$. Note that $r$-wise $t$-union families are $r$-wise agreeing, but not vice versa. $r$-wise $t$-union families are essentially $r$-wise $t$-agreeing families which must agree with coordinate value $0$. In some sense, the notion of $r$-wise agreeing families is geometrically more symmetric since it is preserved under flipping a coordinate.

\begin{thm}[Brace--Daykin \cite{BD}, Frankl \cite{Fra91,Fra19}]\label{thmbdf}
 Let $n>r\ge 2$ and $t\le 2^r-r-1$. Suppose that $\ff\subset 2^{[n]}$ is non-trivial $r$-wise $t$-union. Then
\begin{equation}\label{eqbdf}
  |\ff|\le (r+t+1)2^{n-r-t}.
\end{equation}
\end{thm}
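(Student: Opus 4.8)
The plan is to attack this via its complement. Setting $\g=\{[n]\setminus A:A\in\ff\}$, the family $\ff$ is non-trivial $r$-wise $t$-union exactly when $\g$ is $r$-wise $t$-intersecting (any $r$ members have an intersection of size at least $t$) and $\bigcap_{B\in\g}B=\emptyset$, and $|\ff|=|\g|$. Replacing $\g$ by its upward closure preserves both conditions and does not decrease $|\g|$, so assume $\g$ is an up-set. The target bound is attained by $\g^{\ast}=\{B\subseteq[n]:|B\cap[r+t]|\ge r+t-1\}$: any $r$ of its members omit together at most $r$ of the coordinates in $[r+t]$, so share at least $t$ of them; $\g^{\ast}$ is non-trivial; and $|\g^{\ast}|=(r+t+1)2^{n-r-t}$. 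Thus the theorem says $\g^{\ast}$ is the largest non-trivial family. (It is the first of a chain of non-trivial candidates $\g_s=\{B:|B\cap[t+rs]|\ge t+rs-s\}$, $s\ge1$, with $\g_1=\g^{\ast}$.)

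The hypothesis $t\le 2^r-r-1$ marks exactly the range in which, even \emph{without} non-triviality, the trivial family $\{B:[t]\subseteq B\}$ of size $2^{n-t}$ is the largest $r$-wise $t$-intersecting family: indeed $2^{n-t}\ge(r+t+1)2^{n-r-t}$ precisely when $2^r\ge r+t+1$. The induction below repeatedly meets (possibly trivial) $r$-wise $t'$-intersecting subfamilies with $t'\le t$, and needs precisely this fact for each of them; this is where the cutoff enters.

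The engine is induction on $n$ carried by a strengthened two-family statement. Split $\g$ at a coordinate, say $n$: then $|\g|=|\hh|+|\bb|$, where $\bb=\{B\subseteq[n-1]:B\in\g\}$ and $\hh=\{B\subseteq[n-1]:B\cup\{n\}\in\g\}$; since $\g$ is an up-set, $\bb\subseteq\hh$, the family $\hh$ is $r$-wise $(t-1)$-intersecting, any $r$ members of $\hh$, at least one of which lies in $\bb$, still intersect in at least $t$ elements (read off from $r$ members of $\g$ not all containing $n$), and non-triviality of $\g$ amounts to $\bigcap_{B\in\hh}B=\emptyset$ together with $\bb\ne\emptyset$. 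The cross condition linking $\hh$ and $\bb$ is essential: bounding $\hh$ and $\bb$ through separate one-family results is too lossy to reach $(r+t+1)2^{n-r-t}$. So I would induct on all pairs $(\hh,\bb)$ with $\bb\subseteq\hh$, these two intersection conditions, $\bigcap\hh=\emptyset$, and $\bb\ne\emptyset$, proving $|\hh|+|\bb|\le(r+t+1)2^{n-r-t}$, recursing by splitting on a further coordinate, with a parallel induction on $t$ (the base case $t=1$, the Brace--Daykin theorem, being handled by the same recursion with the then-vacuous $(t-1)$-intersecting condition).

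The step I expect to be genuinely hard — and where the classical proofs split into cases, reflecting the historical progression (Brace--Daykin for $t=1$, then Frankl in general) \cite{BD,Fra91,Fra19} — is keeping non-triviality under control through the recursion. Splitting off a coordinate (and any shifting one uses to tame the recursion) can collapse $\bigcap\hh=\emptyset$ into $\bigcap\hh\ne\emptyset$, i.e.\ turn $\g$ into a star; when that is forced, one must recover the bound from the weaker fact that the collapsed family is covered by a small set of coordinates and still multiply-intersecting. One must simultaneously verify that the degenerate extremal pairs — for instance $\bb=\{B:[r+t-1]\subseteq B\}$ inside $\hh=\{B:|B\cap[r+t-1]|\ge r+t-2\}$, which also meets the bound with equality — contribute exactly $(r+t+1)2^{n-r-t}$ and nothing more. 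Making this case analysis tight, uniformly over all $r$ and all $t$ in the stated range, is the crux of the matter.
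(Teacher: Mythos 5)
Your setup is sound as far as it goes: the complementation to $r$-wise $t$-intersecting families, the reduction to up-sets, the verification that $\g^{\ast}=\{B:|B\cap[r+t]|\ge r+t-1\}$ is non-trivial of size $(r+t+1)2^{n-r-t}$, the observation that $t\le 2^r-r-1$ is exactly the range where $2^{n-t}\ge(r+t+1)2^{n-r-t}$, and the two-family decomposition $|\g|=|\hh|+|\bb|$ with $\bb\subseteq\hh$, its inherited intersection conditions, and the translation of non-triviality into $\bigcap\hh=\emptyset$, $\bb\ne\emptyset$ are all correct. (For reference, the paper does not prove this theorem at all --- it is imported as a known result from \cite{BD,Fra91,Fra19} and used as a black box in the induction for Theorem~\ref{thm1} --- so there is no in-paper proof to compare against.)

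However, what you have written is a proof plan, not a proof, and the gap is precisely where you say it is. The entire content of the theorem lives in the inductive step you defer: showing that $|\hh|+|\bb|\le(r+t+1)2^{n-r-t}$ for all pairs satisfying your cross conditions, including the degenerate branches where splitting (or shifting) collapses $\bigcap\hh=\emptyset$. You name this as ``the crux of the matter'' and then stop; no case analysis is performed, so the bound is never actually derived. Two further points compound the gap. First, your base case $t=1$ is the Brace--Daykin theorem itself, which is not a triviality that the ``same recursion'' dispatches for free --- it requires its own argument. Second, you lean on the statement that for $t\le 2^r-r-1$ every (possibly trivial) $r$-wise $t$-intersecting family has size at most $2^{n-t}$; this is itself a substantive theorem of Frankl and must be either proved or explicitly cited as an ingredient, and you would still need to show how it is threaded through each branch of the recursion where the subfamily degenerates to a star. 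As it stands, the proposal identifies the right framework and the right obstacles but does not overcome them.
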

The following example shows that the bound \eqref{eqbdf} (and therefore the bound in Theorem~\ref{thm1}) is tight:
\begin{equation}\label{eqbdfex}\bb=\{B\in 2^{[n]}: |B\cap [r+t]|\le 1\}.\end{equation}
Note that $\cup_{b\in \bb}B = [n], \cap_{b\in \bb}B = \emptyset$.  In general, it is not difficult to see that inclusion-maximal non-trivial $r$-wise $t$-union families are non-trivial $r$-wise $t$-agreeing. (Indeed, to see that it is non-trivial as an $r$-wise $t$-agreeing family, note that if  $x\in A$ for all $A\in\ff$, then we may add $A\setminus \{x\}$ to the family while keeping it $r$-wise $t$-union. This contradicts maximality.) That is, Theorem~\ref{thm1} is a strengthening of Theorem~\ref{thmbdf}.

\section{Proof of Theorem~\ref{thm1}}

The idea of the proof is to apply the squashing operation to the family. The first possibility is that the family stays nontrivial all along and then becomes down-closed. Then it is $r$-wise $t$-union, and we may apply Theorem~\ref{thmbdf}. The second possibility is that we lose non-triviality on a certain step. Then we can define a family of the same size and on the ground set of size $n-1$ that is $r$-wise $(t-1)$-agreeing (or $(r-1)$-wise agreeing for $t=1$). We then apply induction to this family. The proof strategy seems simple and natural, but it took us surprisingly long to find it.

Let us introduce a convenient definition.
\begin{defi}
  For a collection of sets $A_1,\ldots, A_r$ let $W(A_1,\ldots, A_r)$ denote the set of coordinates on which not all $A_i$'s agree: $(A_1\cup\ldots\cup A_r)\setminus (A_1\cap\ldots\cap A_r)$.  For integers $n>\ell\ge2$ and $r\ge 2$ let
$$w(n,\ell,r) = \max\{|\aaa|: \aaa\subset 2^{[n]}, |W(A_1,\ldots, A_r)|\le \ell\text{ for all }A_1,\ldots, A_r\in \aaa\}.$$
Let $w^*(n,\ell,r)$ denote the maximum taken over all non-trivial (in the agreeing sense) families $\aaa\subset 2^{[n]}$.
\end{defi}
In this terminology, Theorem~\ref{thm1} states that $w^*(n,n-t,r) = (r+t+1)2^{n-t-r}$ for $t\le 2^r-r-1$. Kleitman \cite{Kl} determined $w(n,\ell,2)$ for all $\ell$. Except for the trivial case $\ell=1$, the extremal construction is non-trivial whence $w^*(n,\ell,2) = w(n,\ell,2)$. For the proof he introduced an operation on families of sets, called {\it squashing} (cf. \cite{FT}).

For $\ff\subset 2^{[n]}$ and $i\in[n]$ define $\ff(i) = \{F\setminus \{i\}: i\in F, F\in \ff\}$ and $\ff(\bar i) =\{F: i\notin F, F\in \ff\}$. Note that $|\ff| = |\ff(i)|+|\ff(\bar i)|$ and that $\ff$ is uniquely determined by the two families $\ff(i),\ff(\bar i)\subset 2^{[n]\setminus \{i\}}$.

 For  $\ff\subset 2^{[n]}$ and $i\in[n]$ the squashed family $S_i(\ff)$ is the (unique) family $\g\subset 2^{[n]}$ determined by
$\g(i) = \ff(i)\cap \ff(\bar i)$ and $\g(\bar i) = \ff(i)\cup \ff(\bar i)$. Geometrically, one can think of this operation as of `gravity' acting along the $i$-th direction of the hypercube and making vertices corresponding to sets in $\ff$ `fall' on the hyperplane $\{x_i=0\}$ whenever the corresponding place is vacant. Note that $|\g(i)|+|\g(\bar i)| = |\ff(i)|+|\ff(\bar i)|$, implying $|\g| = |\ff|$.  The following statement is quintessential for the proofs.

\begin{lem}
  Let $\ff\subset 2^{[n]}$ and $i\in [n]$. Put $\g = S_i(\ff)$. Then for arbitrary $r$, $$\max\{|W(F_1,\ldots, F_r)|: F_1,\ldots, F_r\in \ff\}\ge \max\{|W(G_1,\ldots, G_r)|: G_1,\ldots, G_r\in \g\}.$$
\end{lem}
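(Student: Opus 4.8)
The plan is to exploit the fact that, after squashing, the ``spread'' set $W$ decomposes into its behaviour on the single coordinate $i$ and its behaviour on $[n]\setminus\{i\}$, and that the latter part is completely insensitive to whether $i$ is inserted into a set or not. This is the standard squashing template used by Kleitman \cite{Kl} and Frankl--Tokushige \cite{FT}, adapted to $r$-wise spreads.

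First I would fix $G_1,\dots,G_r\in\g$ and set $H_j:=G_j\setminus\{i\}\subseteq[n]\setminus\{i\}$. By the definition of $S_i(\ff)$, for every $j$ at least one of $H_j,\ H_j\cup\{i\}$ lies in $\ff$, and if $i\in G_j$ (so $H_j\in\g(i)=\ff(i)\cap\ff(\bar i)$) then \emph{both} of them lie in $\ff$. I would then pick, for each $j$, a ``lift'' $F_j\in\ff$ with $F_j\setminus\{i\}=H_j$ (one or two candidates always exist). For any such choice, a coordinate $x\ne i$ belongs to $W(F_1,\dots,F_r)$ iff it belongs to $W(G_1,\dots,G_r)$, because on $[n]\setminus\{i\}$ the sets $F_j$ and $G_j$ coincide. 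Hence $|W(F_1,\dots,F_r)|-|W(G_1,\dots,G_r)|=\mathbf 1\{i\in W(F)\}-\mathbf 1\{i\in W(G)\}$, and the lemma reduces to the single assertion: \emph{if} $i\in W(G_1,\dots,G_r)$, then the lifts $F_j$ can be chosen so that also $i\in W(F_1,\dots,F_r)$. (If $i\notin W(G)$ there is nothing to prove, since dropping $i$ only shrinks $W$, so any lift works.)

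So assume $i\in W(G)$ and reindex so that $i\in G_1,\dots,G_p$ and $i\notin G_{p+1},\dots,G_r$, with $1\le p\le r-1$. For $j\le p$ I have full freedom ($H_j$ and $H_j\cup\{i\}$ are both in $\ff$). For $k>p$ I know only $H_k\in\ff(i)\cup\ff(\bar i)$: if $H_k\in\ff(\bar i)$ I may take $F_k=H_k$, but if $H_k\in\ff(i)\setminus\ff(\bar i)$ the only available lift is $F_k=H_k\cup\{i\}$, which is forced to contain $i$. I would split on this. If some index $k>p$ has $H_k\in\ff(\bar i)$, put $i$ into $F_1,\dots,F_p$ and leave $i$ out of that $F_k$: then $i$ is in some lift and outside another, so $i\in W(F)$. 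Otherwise every $k>p$ is forced and all of $F_{p+1},\dots,F_r$ contain $i$; then I reverse the move and \emph{omit} $i$ from $F_1,\dots,F_p$ (legitimate since $H_j\in\ff(\bar i)$ for $j\le p$), keeping $i$ in $F_{p+1},\dots,F_r$. As $p\ge1$ and $r-p\ge1$, again $i$ lies in some lift and outside another, so $i\in W(F)$.

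I do not expect a real obstacle: the whole argument is a finite, coordinate-wise case check, and the decomposition of $W$ together with the reduction to coordinate $i$ is routine. The only point that demands attention is not overlooking the sub-case in which all the $i$-free sets among the $G_j$ become forced to acquire $i$ upon lifting to $\ff$; the fix is the symmetric move of stripping $i$ from the $i$-containing sets, which is available precisely because those correspond to members of $\ff(i)\cap\ff(\bar i)$.
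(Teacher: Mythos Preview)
Your proposal is correct and follows essentially the same approach as the paper: lift each $G_j$ to some $F_j\in\ff$ with the same restriction to $[n]\setminus\{i\}$, note that $W$ agrees off coordinate $i$, and then use the freedom at the indices $j$ with $i\in G_j$ (where $H_j\in\ff(i)\cap\ff(\bar i)$) to force $i\in W(F_1,\dots,F_r)$. The paper's write-up is slightly slicker in that it avoids your two-case split: having fixed any lift $F_2$ of some $G_2$ with $i\notin G_2$, it simply chooses $F_1$ (which has both options available) to disagree with $F_2$ on $i$, which already gives $i\in W(F_1,\dots,F_r)$ regardless of the remaining lifts.
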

\begin{proof}
  Let $G_1,\ldots, G_r\in \g$. Then there exist $F_1,\ldots, F_r\in\ff$ so that $G_j\setminus \{i\} = F_j\setminus \{i\}$ for $j\in[r]$. Hence $W(F_1,\ldots, F_r)\subset W(G_1,\ldots, G_r)$ is automatically satisfied unless $i\notin W(F_1,\ldots, F_r)$ and $i\in W(G_1,\ldots, G_r)$. The latter implies $i\in G_1\cup\ldots \cup G_r$ and $i\notin G_1\cap \ldots\cap G_r$. By symmetry, we may assume $i\in G_1, i\notin G_2$.

Since $\g(i) = \ff(i)\cap \ff(\bar i)$, both $G_1$ and $G_1\setminus \{i\}$ must be members of $\ff$. Hence no matter whether $i\in F_2$ or $i\notin F_2$, we may choose $F_1\in \{G_1\setminus \{i\}, G_i\}\subset \ff$ so that $i\in (F_1\cup F_2)\setminus (F_1\cap F_2)$ whence $i\in W(F_1,\ldots, F_r)$. Consequently, $W(F_1,\ldots, F_r)\supset W(G_1\ldots, G_r)$. This proves the lemma.
\end{proof}

The problem with squashing is that it might destroy non-triviality. E.g., for $\ff_{even} = \{F\subset [n]: |F|\text{ is even }\}$, which is an extremal constriction for $w^*(n,n-1,2)$ if $n$ is odd, $S_i(\ff) = 2^{[n]\setminus \{i\}}$ for all $i\in[n]$. The following lemma permits us to circumvent this difficulty.

\begin{lem}\label{lem2}
  Let $\ff\subset 2^{[n]}$ be a non-trivial $r$-wise $t$-agreeing family. For $j\in[n]$, consider $\ff_j:=\{F\setminus \{j\}: F\in \ff\}$, thought of as a subfamily of $2^{[n]\setminus \{j\}}$. If $t\ge 2$ then $\ff_j$ is non-trivial $r$-wise $(t-1)$-agreeing. If $t=1$ then $\ff_j$ is non-trivial $(r-1)$-wise agreeing.
\end{lem}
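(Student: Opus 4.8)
The plan is to establish the three assertions in turn, with the $t=1$ reduction carrying essentially all of the work. Non-triviality of $\ff_j$ is immediate: deleting the single coordinate $j$ commutes with taking unions and intersections, so $\bigcup_{A\in\ff_j}A=\big(\bigcup_{F\in\ff}F\big)\setminus\{j\}=[n]\setminus\{j\}$ and $\bigcap_{A\in\ff_j}A=\big(\bigcap_{F\in\ff}F\big)\setminus\{j\}=\emptyset$, using that $\ff$ is non-trivial; hence $\ff_j$ is a non-trivial family on the ground set $[n]\setminus\{j\}$. For the case $t\ge 2$, I would fix $G_1,\dots,G_r\in\ff_j$ and lift them to $F_1,\dots,F_r\in\ff$ with $G_i=F_i\setminus\{j\}$. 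The key observation is that for any coordinate $x\ne j$ one has $x\in G_i\iff x\in F_i$, so $W(G_1,\dots,G_r)=W(F_1,\dots,F_r)\setminus\{j\}$, and therefore the set of coordinates of $[n]\setminus\{j\}$ on which $G_1,\dots,G_r$ agree equals $\big([n]\setminus W(F_1,\dots,F_r)\big)\setminus\{j\}$. Since $\ff$ is $r$-wise $t$-agreeing this set has size at least $t-1\ge 1$, which is exactly what is claimed.

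For the case $t=1$ I argue by contradiction. Suppose some $G_1,\dots,G_{r-1}\in\ff_j$ agree on no coordinate of $[n]\setminus\{j\}$, and lift them to $F_1,\dots,F_{r-1}\in\ff$ as before, so that $[n]\setminus\{j\}\subseteq W(F_1,\dots,F_{r-1})$. I then want to produce a further set $F_r\in\ff$ with, in addition, $j\in W(F_1,\dots,F_r)$, for then $W(F_1,\dots,F_r)=[n]$, i.e.\ $F_1,\dots,F_r$ agree on no coordinate at all, contradicting that $\ff$ is $r$-wise $1$-agreeing. The choice of $F_r$ splits according to the status of $j$: if $j$ lies in all of $F_1,\dots,F_{r-1}$, use $\bigcap_{A\in\ff}A=\emptyset$ to pick $F_r$ with $j\notin F_r$; if $j$ lies in none of them, use $\bigcup_{A\in\ff}A=[n]$ to pick $F_r$ with $j\in F_r$; and if $j$ lies in some but not all of them, then already $j\in W(F_1,\dots,F_{r-1})$, so $W(F_1,\dots,F_{r-1})=[n]$ and we may simply take $F_r=F_1$. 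In every case $W(F_1,\dots,F_r)=[n]$, giving the contradiction, so $\ff_j$ is $(r-1)$-wise agreeing.

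The only genuinely delicate point is that last case of the $t=1$ argument: the contradiction there is obtained by testing the $r$-wise property on $r$ sets that are not all distinct, so one is implicitly using the standard convention (in force throughout the paper, e.g.\ in the proof of the squashing lemma) that ``$r$ sets from $\ff$'' allows repetitions — equivalently, that an $r$-wise $t$-agreeing family is automatically $s$-wise $t$-agreeing for every $s\le r$. Granting this, the remainder is just the elementary bookkeeping above on how $W(\cdot)$ and the two non-triviality conditions behave under deletion of the coordinate $j$; I do not expect any further obstacle.
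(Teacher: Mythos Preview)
Your argument is correct and follows essentially the same route as the paper: both establish non-triviality by the same trivial computation, handle $t\ge 2$ via the identity $W(G_1,\dots,G_r)=W(F_1,\dots,F_r)\setminus\{j\}$, and for $t=1$ use non-triviality of $\ff$ to adjoin an $r$-th set that disagrees with the lifts on coordinate $j$. Your contrapositive framing and explicit three-way case split (with the careful remark on allowing repetitions) are cosmetic differences; the paper phrases the same step directly as ``since $\ff$ is non-trivial, we can find a set $B_r$ such that $B_1,\dots,B_r$ do not agree on $j$''.
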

\begin{proof}
It should be clear that $\cap_{A\in \ff_j}A\subset \cap_{A\in \ff} A = \emptyset$ and $\cup_{A\in \ff_j}A = \cup_{A\in \ff}A \setminus \{j\} = [n]\setminus \{j\}$, and thus $\ff_j$ is non-trivial. By the definition of $\ff_j$, for any $\ell$ and  $A_1,\ldots, A_\ell\in \ff_j$ there are $B_1,\ldots, B_\ell\in \ff$ such that $B_i\in \{A_i,A_i\cup \{j\}\}$, and thus
$$W(B_1,\ldots  B_\ell)\setminus \{j\}= W(A_1,\ldots  A_\ell).$$
If $t\ge 2$, then, using the above for $\ell=r$, we get $(n-1)-|W(A_1,\ldots  A_r)|\ge n-|W(B_1,\ldots  B_r)|-1\ge t-1.$  If $t=1$ then, using the above for $\ell = r-1$, we get $W(B_1,\ldots  B_{r-1})\setminus \{j\}= W(A_1,\ldots  A_{r-1}).$ Since $\ff$ is non-trivial, we can find a set $B_r$ such that $B_1,\ldots, B_r$ do not agree on $j$, and thus, using that $\ff$ is $r$-wise agreeing, we get $\emptyset \ne [n]\setminus W(B_1,\ldots, B_r)\supset [n]\setminus \big(W(B_1,\ldots  B_{r-1})\cup \{j\}\big)= ([n]\setminus \{j\})\setminus W(A_1,\ldots  A_{r-1})$, which proves that $\ff_j$ is $(r-1)$-wise agreeing.
\end{proof}

\begin{proof}[Proof of Theorem~\ref{thm1}]
The proof is by induction on $t+r$, subject to the constraint $t\le 2^{r}-r-1$. The case $r=2$ serves as the base case (note that here only $t=1$ is allowed).

Take the largest non-trivial $r$-wise $t$-agreeing family $\ff\subset 2^{[n]}$ and sequentially apply the squashing operations to $\ff$ for $j=1,2,\ldots, n$. There are two possible outcomes of this procedure. The first outcome is that the family (by abuse of notation, also $\ff$) always stays non-trivial. The resulting family is down-closed: for any set $A\in \ff$ and $B\subset A$, we have $B\in \ff$. (Informally, squashing in the $i$-th coordinate makes family down-closed with respect to coordinate $i$ while not affecting this property w.r.t. other coordinates.)  %Arguing indirectly, assume that $S_j(\ff) = \ff$ for any $j\in[n]$ and that for some $A\supset B$ we have $A\in \ff fix such $A,B$. If $A\setminus B = \{i_1,\ldots, i_k\}$ then $S_{i_1}(A)= A\setminus\{i_1\}\in \ff$. Arguing induct
Then, whenever $x\in A_1\cap\ldots\cap A_r$, $A_i\in \ff$, we also have $A_1\setminus \{x\}\in \ff$, and the set of agreeing coordinates for $A_1\setminus \{x\}, A_2,\ldots, A_r$ does not include $x$. This implies that, in order to guarantee the $r$-wise $t$-agreeing property, we must have $|A_1\cup \ldots\cup A_r|\le n-t.$ In other words, $\ff$ is a non-trivial $r$-wise $t$-union family, and we may apply Theorem~\ref{thmbdf} to $\ff$ and get the desired bound $|\ff|\le (r+t+1)2^{n-r-t}$.

The second outcome is that at a certain stage we lose non-triviality: while $\ff$ is non-trivial,  $S_j(\ff)$ is trivial. This means that no set in $S_j(\ff)$ contains $j$,  and thus $S_j(\ff)$ coincides with $\ff_j$ (as defined in Lemma~\ref{lem2}), in particular, $|S_j(\ff)| = |\ff_j|$. By Lemma~\ref{lem2}, $\ff_j$ is  non-trivial $r$-wise $(t-1$)-agreeing for $t\ge 2$, and non-trivial $(r-1)$-wise agreeing for $t=1$. In any case, we may apply the induction hypothesis to $\ff_j$ and get $$|\ff|=|S_j(\ff)|=|\ff_j|\le (r+t)2^{(n-1)-r-t+1} <(r+t+1)2^{n-r-t},$$ which proves the required bound.
\end{proof}

Working a bit harder, one can determine the families for which equality in Theorem~\ref{thm1} for $r\ge 3$ is attained.

\begin{thm} Suppose that $r\ge 3,t\ge 1$ and $\ff$ is $r$-wise $t$-agreeing for $t<2^{r}-r-1$ and $|\ff| = (r+t+1)2^{n-r-t}$. Then there is a set $A\in {[n]\choose r+t}$ and $R\subset A$ such that
\begin{equation}\label{eqex2}\ff = \{F\Delta R: F\subset [n], |F\cap A|\le 1\},\end{equation}
where $\Delta$ stands for symmetric difference.
\end{thm}
To prove this theorem, we  analyze the squashing procedure. If, while running the procedure, we lose non-triviality, then the size of the family is smaller than the extremal value, and thus we must arrive at a non-trivial $r$-wise $t$-union family at the end of the procedure. The first author showed \cite{Fra19} that the extremal family in Theorem~\ref{thmbdf} for $t<2^r-r-1$ is unique and, up to permuting the coordinates, is of the form \eqref{eqbdfex}. Thus, $\ff$ is also of the form \eqref{eqex2} at the end of the procedure. In order to complete the proof, we need to show that, provided $S_j(\ff)$ is of the form \eqref{eqex2}, $\ff$ itself must be of the form \eqref{eqex2}. In order to simplify the exposition, let us assume that $j = 1$ and $A = [r+t]$. Next, replacing $\ff$ with $\ff\Delta R =\{F\Delta R: F\in\ff\}$ preserves the property of being non-trivial $r$-wise $t$-intersecting and transforms a family of the form \eqref{eqex2} into a family of the same form. Thus, we may replace $\ff$ with $\ff\Delta R'$  a suitably chosen $R'\subset [2,r+t]$ and note that $S_1(\ff\Delta R') = S_1(\ff)\Delta R'$.  We may thus w.l.o.g. assume that $S_1(\ff) = \{ F\subset [n]: |F\cap[r+t]|\le 1\}$.

By the definition of squashing, we must have $\ff(1)\cap \ff(\bar 1)= \{F\subset [2,n]: F\cap[2,r+t]=\emptyset\}$ and $\ff(1)\cup \ff(\bar 1) = \{F\subset [2,n]: |F\cap[2,r+t]|\le 1\}$. Note that  $$\ff(1)\Delta \ff(\bar 1)=\{F\subset [2,n]: |F\cap [2,r+t]|=1\}.$$
 If either $\ff(1)\supset \ff(\bar 1)$ or $\ff(\bar 1)\supset \ff(1)$ then $\ff$ is of the form \eqref{eqex2}. Arguing indirectly, assume that both $\ff(1)\setminus \ff(\bar 1)$ and $\ff(\bar 1)\setminus \ff(1)$ are non-empty.  Further, assume w.l.o.g. that $|\ff(1)\setminus \ff(\bar 1)|\le |\ff(\bar 1)\setminus \ff(1)|$ and take $A_1\in \ff(1)\setminus \ff(\bar 1)$. Assume that $A_1\cap [2,r+t]=\{i_1\}$. Take $A_2\in \ff(\bar 1)\setminus \ff(1)$ such that $A_2\cap [2,r+t]=\{i_2\}$, $i_2\ne i_1$. This is possible since
{\small $$|\ff(\bar 1)\setminus \ff(1)|\ge \frac 12|\ff(1)\Delta \ff(\bar 1)| = \frac 12 (r+t-1)2^{n-r-t}>2^{n-r-t}=\big|\{F\subset [2,n]: F\cap[2,r+t] = \{i_1\}\}\big|,$$}
\noindent where in the second inequality we use the assumption $r\ge 3$. The key observation is that, if we denote $A_1', A_2'$ the sets that correspond to $A_1,A_2$ in $\ff$, then $A_1'\cap A_2'\cap [r+t] = \emptyset$ and $|(A_1'\cup A_2')\cap[r+t]| =3$. Crucially, the set of coordinates inside $[r+t]$ where they do not agree is $3$ and not $2$, as it would have been for any two sets in the extremal example. In the next paragraph, we complete these two sets to an $r$-tuple that violates the $r$-wise $t$-agreeing property.

Next, fix some distinct $i_3,\ldots, i_{r}\in [2,r+t]\setminus\{i_1,i_2\}$ and for each $i_s,$ $s\in[3,r]$, take a set $A_s\in \ff(\bar 1)\cup \ff(1)$ such that $A_s\cap [2,r+t] = \{i_s\}$ and $A_s\cap [r+t+1,n]=[r+t+1,n]\setminus A_1$. For each $s\in [r]$ let $A'_i$ be the  set in $\ff$ that corresponds to $A_i$. Note that $\cup_{i\in[r]}A'_i\cap [2,n] = \cup_{i\in[r]}A_i\cap [2,n] = \{i_1,\ldots, i_r\}\cup[r+t+1,n]$ and that $1\in A'_1\cup A'_2\subset \cup_{i\in[r]}A'_i$. Thus, $|\cup_{i\in[r]}A'_i|=n-t+1$. Similarly,  $\cap_{i\in[r]}A'_i\cap [2,n] = \cap_{i\in[r]}A_i\cap [2,n] = \emptyset$ and $1\not\in A'_1\cap A'_2\supset \cap_{i\in[r]}A'_i$. Thus, $\cap_{i\in[r]}A'_i=\emptyset$. We conclude that $|W(A'_1,\ldots, A'_r)|=n-t+1$, contradicting the fact that $\ff$ is $r$-wise $t$-agreeing.\\

{\bf Acknowledgements} We thank the referees for their detailed comments that helped to improve the presentation of the paper. The research is supported by the Ministry of Science and Higher Education of the Russian Federation (Goszadaniye) No. 075-03-2024-117, project No. FSMG-2024-0025.

\end{document}